\theoremstyle{plain}
\newtheorem{thm}{Theorem}
\newtheorem{cor}{Corollary}
\newtheorem{lem}{Lemma}
\newtheorem{prop}{Proposition}
\newtheorem{defn}{Definition}
\newcommand{\sub}{\subseteq}
\begin{document}

\begin{center}
\Large \textbf{The structure of graphs with forbidden $C_4$, $\overline{C}_4$, $C_5$, chair and co-chair}
\end{center}
\begin{center}
Salman GHAZAL\footnote{\noindent Department of Mathematics, Faculty of Sciences I, Lebanese University, Hadath, Beirut, Lebanon.\\
                       E-mail: salmanghazal@hotmail.com\\

                       }
\end{center}
\vskip1cm
\begin{abstract}
 We find the structure of graphs that have no $C_4$, $\overline{C}_4$, $C_5$, chair and co-chair as induced subgraphs.
\end{abstract}

\begin{section}{Introduction}

\par \hskip0.6cm In this paper, graphs are finite and simple. The vertex set and edge set of a graph $G$ are  denoted by $V(G)$ and $E(G)$ respectively. Two edges of a graph $G$ are said to be adjacent if they have a common endpoint and two vertices $x$ and $y$ are said to be adjacent if $xy$ is an edge of $G$. The \emph{neighborhood} of a vertex $v$ in a graph $G$, denoted by $N_G(v)$, is the set of all vertices adjacent to $v$ and its \emph{degree} is $d_G(v)=|N_G(v)|$. We omit the subscript if the graph is clear from the context. For two set of vertices $U$ and $W$ of a graph $G$, let $E[U, W]$ denote the set of all edges in the graph $G$ that joins a vertex in $U$ to a vertex in $W$. A graph is empty if it has no edges. For $A\sub V(G)$, $G[A]$ denotes the sub-graph  of $G$ induced by $A$. If $G[A]$ is an empty graph, then $A$ is called a stable. While, if $G[A]$ is a complete graph, then $A$ is called a clique set, that is any two distinct vertices in $A$ are adjacent. The complement graph of $G$ is denoted by $\overline{G}$ and defined as follows: $V(G)=V(\overline{G})$ and $xy\in E(\overline{G})$ if and only if $xy\notin E(G)$. \\

A graph $H$ is called forbidden subgraph of $G$ if $H$ is not (isomorphic to) an induced subgraph of $G$.\\

A cycle on $n$ vertices is denoted by $C_n=v_1v_2...v_nv_1$ while a path on $n$ vertices is denoted by $P_n=v_1v_2...v_n$. A chair is any graph on 5  distinct vertices $x,y,z,t,v$ with exactly 5 edges $xy,yz,zt$ and $zv$. The co-chair or $\overline{chair}$ is the complement of a chair (see the below figure).

\begin{center}
\includegraphics[width=8cm, height=7cm]{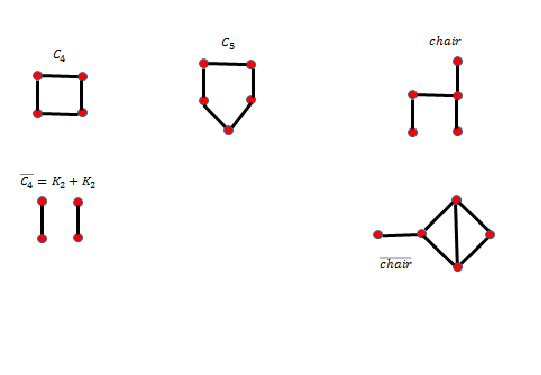}
\end{center}

Many graphs encountered in the study of graph theory are characterized by configurations or subgraphs they contain. However, there are occasions where it is easier to characterize graphs by sub-graphs or induced sub-graphs they do not contain. For example, trees are the connected graph without (induced) cycles. Bipartite graphs are those without (induced) odd cycles (\cite{Konig}). Split graphs are those without induced $C_4$, $\overline{C}_4$ and $C_5$. Line graphs are characterized by the absence of only nine particular graphs as induced sub-graph (see \cite{linech}). Perfect graphs are characterized by $C_{2n+1}$ and $\overline{C}_{2n+1}$ being forbidden, for all $n\geq 2$ (see \cite{perfectch}).The purpose of this paper is to find the structure of graphs such that $C_4$, $\overline{C}_4$, $C_5$, chair and co-chair are forbidden subgraphs.
\end{section}

\begin{section}{Preliminary Definitions and Theorems}

\begin{defn}
A graph $G$ is a called a split graph if its vertex set is the disjoint union of a stable set $S$ and a clique set $K$. In this case, $G$ is called an $\{S$, $K\}$-split graph.
\end{defn}

If $G$ is an $\{S$, $K\}$-split graph and $\forall s\in S$, $\forall x\in K$ we have $sx\in E(G)$, then $G$ is called a complete split graph.\\

If $G$ is an $\{S$, $K\}$-split graph and $E[S,K]$ forms a perfect matching of $G$, then $G$ is called a perfect split graph.

\begin{thm} (F\"{o}ldes and Hammer \cite{splitch})
$G$ is a split graph if and only if $C_4$, $\overline{C}_4$ and $C_5$ are forbidden subgraphs of $G$.
\end{thm}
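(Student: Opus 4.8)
I will prove the stated equivalence by treating the two implications separately, the reverse implication being the substantive one.

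For the forward direction (a split graph forbids $C_4$, $\overline{C}_4$ and $C_5$), the plan is to exploit the fact that the split property is inherited by induced subgraphs: if $V(G)=S\sqcup K$ with $S$ stable and $K$ a clique, then for any $A\subseteq V(G)$ the partition $A=(A\cap S)\sqcup(A\cap K)$ exhibits $G[A]$ as a split graph. Hence it suffices to check that none of $C_4$, $\overline{C}_4$, $C_5$ is itself a split graph. Each check is immediate from $\alpha=\omega=2$: for $C_5$ one has $\alpha(C_5)+\omega(C_5)=4<5$, so no stable set together with a clique can cover all five vertices; for $C_4$ and for $\overline{C}_4=2K_2$ a clique has at most two vertices, and the only cliques of size two are single edges, whose removal leaves the two remaining vertices joined by an edge, so the complementary part is never stable. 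This settles the forward direction.

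For the reverse direction, assume $G$ contains no induced $C_4$, $\overline{C}_4$ or $C_5$. The plan is to take a clique $K$ of maximum cardinality, set $S=V(G)\setminus K$, and prove that $S$ is stable; the partition $V(G)=S\sqcup K$ then witnesses that $G$ is split. Suppose not, and fix an edge $xy$ with $x,y\in S$. I would classify the vertices of $K$ by their adjacency to the pair $\{x,y\}$: let $Z=N(x)\cap N(y)\cap K$, let $X$ (resp.\ $Y$) be the clique vertices adjacent to $x$ but not $y$ (resp.\ $y$ but not $x$), and let $W$ be the clique vertices adjacent to neither, so that $K=X\sqcup Y\sqcup Z\sqcup W$. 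Maximality of $K$ forces $x$ and $y$ each to have a non-neighbour in $K$, i.e.\ $Y\cup W\neq\emptyset$ and $X\cup W\neq\emptyset$. The first cases close quickly: if both $X$ and $Y$ are non-empty, picking $a\in X$ and $b\in Y$ makes the cycle $x$--$a$--$b$--$y$--$x$ an induced $C_4$ (here $a\sim b$ since $a,b\in K$, while the chords $xb$ and $ay$ are absent); so, up to swapping $x$ and $y$, I may assume $Y=\emptyset$, whence $W\neq\emptyset$. If $|W|\ge 2$, two vertices $c,d\in W$ give an induced $\overline{C}_4$ on $\{x,y,c,d\}$ (the only edges being $xy$ and $cd$); so $|W|=1$, say $W=\{w\}$. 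Finally, if $X=\emptyset$, then $x$ and $y$ have the identical clique-neighbourhood $Z$ and the single common non-neighbour $w$, so $Z\cup\{x,y\}$ is a clique of size $|K|+1$, contradicting maximality.

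The residual case---$Y=\emptyset$, $W=\{w\}$ and $X\neq\emptyset$---is where I expect the real difficulty. Here the four vertices $x,y,w$ together with any $a\in X$ induce only the path $y$--$x$--$a$--$w$ (the edges being $xy$, $xa$, $aw$), a $P_4$, which is permitted in split graphs, so no purely local contradiction is available. The plan to break this is to strengthen the choice of $K$: among all maximum cliques, choose one minimising the number of edges of $G$ lying inside $S$. In the residual case $x$ has the unique non-neighbour $w$ in $K$, so $K'=(K\setminus\{w\})\cup\{x\}$ is again a maximum clique, and passing from $K$ to $K'$ replaces the $S$-edges incident to $x$ by the $S$-edges incident to $w$, changing the count by $|N(w)\cap S|-|N(x)\cap S|$; weighing this against the minimality of $K$ is intended to produce the final contradiction, or to drive an exchange argument that terminates. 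Making this extremal/exchange step actually close every sub-case, rather than merely reproducing another admissible $P_4$, is the crux of the argument and the step on which I would spend the most care.
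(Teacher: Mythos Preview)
The paper does not supply its own proof of this theorem; it is quoted in Section~2 as a preliminary result of F\"oldes and Hammer and then used without argument. So there is nothing in the paper to compare your attempt against, and I simply assess the proposal on its own.

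Your forward direction is correct. In the reverse direction the set-up and the elimination of the first cases are fine, and your secondary extremal choice (among maximum cliques $K$, minimise the number of edges inside $S=V\setminus K$) is the right idea: the swap $K'=(K\setminus\{w\})\cup\{x\}$ is again a maximum clique, and minimality forces $|N(w)\cap S|\ge |N(x)\cap S|\ge 1$, so some $z\in N(w)\cap S$ with $z\neq x,y$ exists.

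The gap is that you stop here, hoping the exchange ``terminates'' without saying how; as you yourself flag, this is exactly the point that needs work. It does close, but by a further case split, and in one branch a sixth vertex is required. If $xz,yz\notin E$, then $\{x,y,z,w\}$ induces $\overline{C}_4$. If $xz\in E$, let $u\in K$ be a non-neighbour of $z$ (one exists since $K$ is maximum, and $u\neq w$ because $zw\in E$); then $u\in X\cup Z$, hence $ux\in E$, and $\{u,x,z,w\}$ induces $C_4$ (edges $ux,xz,zw,wu$; non-edges $uz,xw$). If $xz\notin E$ but $yz\in E$, take $a\in X$: when $az\notin E$ the cycle $a,x,y,z,w,a$ is an induced $C_5$, and when $az\in E$ the set $\{a,x,y,z\}$ induces $C_4$. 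Without this explicit analysis the residual case is not closed; in particular the five vertices $x,y,a,w,z$ alone can induce a graph containing none of $C_4,\overline{C}_4,C_5$ (for instance when $xz,az\in E$ and $yz\notin E$), so the appeal to the extra vertex $u$ is genuinely necessary and your sketch does not yet reach it.
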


\begin{defn} (\cite{thresholdch})
A threshold graph $G$ can be defined as follows:
\begin{description}
\item[1) ] $V(G)=\displaystyle\bigcup_{i=1}^{n+1}(X_i\cup A_{i-1})$, where the $A_i$'s and $X_i$'s are pair-wisely disjoint sets.
\item[2) ] $K:=\displaystyle\bigcup_{i=1}^{n+1}X_i$ is a clique and the $X_i$'s are nonempty, except possibly $X_{n+1}$.
\item[3) ] $S:=\displaystyle\bigcup_{i=0}^{n}A_{i}$ is a stable set and the $A_i$'s are nonempty, except possibly $A_0$.
\item[4) ] $\forall 1\leq j\leq i\leq n$, $G[A_i\cup X_j]$ is a complete split graph.
\item[5)] The only edges of $G$ are the edges of the subgraphs mentioned above.
\end{description}

In this case, $G$ is called an $\{S,$ $K\}$-threshold graph.

\end{defn}

\begin{thm}(Hammer and Chv\`{a}tal \cite{thresholdch})
$G$ is a threshold graph if and only if $C_4$, $\overline{C}_4$ and $P_4$ are forbidden subgraphs of $G$.
\end{thm}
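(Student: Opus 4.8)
The plan is to prove both implications by leaning on Theorem 1, after recording two elementary facts: first, $\overline{C}_4$ is exactly $2K_2$ (two independent edges); second, every $C_5$ contains an induced $P_4$ (four consecutive vertices of the pentagon). The second fact means that forbidding $P_4$ already forces $C_5$ to be forbidden, so the triple $\{C_4,\overline{C}_4,P_4\}$ delivers the split-graph hypotheses of Theorem 1 for free.

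For the forward implication, suppose $G$ is an $\{S,K\}$-threshold graph. By definition $S$ is stable and $K$ is a clique, so $G$ is a split graph and Theorem 1 immediately forbids $C_4$, $\overline{C}_4$ (and $C_5$). It remains to forbid $P_4$. The structural feature I would extract from the definition is that the traces of the stable vertices into $K$ form a chain: a vertex of $A_i$ is adjacent precisely to $X_1\cup\cdots\cup X_i$, so for $a,a'\in S$ the sets $N(a)\cap K$ and $N(a')\cap K$ are comparable under inclusion. Now if $a\text{-}b\text{-}c\text{-}d$ were an induced $P_4$, then since its largest clique and its largest independent set both have size $2$, exactly two of its vertices lie in $K$ and two in $S$; checking the three ways to split the path shows the only consistent assignment is $b,c\in K$ and $a,d\in S$. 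But then $a$ is adjacent to $b$ and not $c$ while $d$ is adjacent to $c$ and not $b$, so $N(a)\cap K$ and $N(d)\cap K$ are incomparable, contradicting the chain property.

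For the converse, let $G$ be $\{C_4,\overline{C}_4,P_4\}$-free. By the remark above $G$ is also $C_5$-free, so Theorem 1 produces a split partition $V(G)=S\cup K$, which I fix once and for all. I would first show that the traces $N(s)\cap K$, for $s\in S$, are totally ordered by inclusion: if some $s,s'$ had incomparable traces, pick $x\in(N(s)\setminus N(s'))\cap K$ and $x'\in(N(s')\setminus N(s))\cap K$; then $ss'$ is a non-edge ($S$ stable), $xx'$ is an edge ($K$ a clique), and the edges $sx$, $xx'$, $x's'$ realize an induced $P_4$ on $s,x,x',s'$, a contradiction. Having this chain, I group $S$ by trace: let $A_0$ be the stable vertices with empty trace and let $A_1,\dots,A_n$ be the remaining classes ordered so that $N(A_1)\subsetneq\cdots\subsetneq N(A_n)$. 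Finally set $X_i=N(A_i)\setminus N(A_{i-1})$ for $1\le i\le n$ (with $N(A_0)=\emptyset$) and $X_{n+1}=K\setminus N(A_n)$.

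It then remains to check that this data satisfies conditions (1)--(5). Conditions (1)--(3) are bookkeeping: the $X_i$ and $A_i$ partition $K$ and $S$ respectively, each $X_i$ with $i\le n$ is nonempty because the chain of traces is strictly increasing, and $A_0$ and $X_{n+1}$ are the only classes allowed to be empty. For condition (4) the point is that the clique-side adjacencies come for free from the stable-side chain: since $x\in X_j$ lies in $N(A_i)=X_1\cup\cdots\cup X_i$ exactly when $i\ge j$, a vertex of $X_j$ is adjacent to all of $A_i$ precisely for $j\le i$, which is exactly the complete split structure of $G[A_i\cup X_j]$; condition (5) then follows because the edges just listed, together with those inside the clique $K$, are literally all the edges of $G$. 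I expect the main obstacle to be this final verification rather than any single hard step: the delicate part is matching the somewhat elaborate indexing of the definition and, in particular, correctly isolating the two exceptional possibly-empty sets $A_0$ (isolated stable vertices) and $X_{n+1}$ (clique vertices with no neighbor in $S$) so that all five conditions hold simultaneously.
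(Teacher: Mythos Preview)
Your argument is correct. Note, however, that in the paper Theorem~2 is quoted from the literature and is not given its own proof; the paper does not supply an argument for the full equivalence. The closest it comes is Proposition~1, which handles the converse direction (a $P_4$-free $\{S,K\}$-split graph is an $\{S,K\}$-threshold graph) by a different method: induction on $|V(G)|$, deleting a clique vertex $x$ of minimum degree, applying the induction hypothesis to $G-x$, and then reinserting $x$ into the threshold layering via a case analysis on $N(x)\cap A_n$. Your route is direct rather than inductive: you first show that the family of traces $\{N(s)\cap K : s\in S\}$ is a chain (otherwise an induced $P_4$ appears), and then read off the blocks $A_0,\ldots,A_n,X_1,\ldots,X_{n+1}$ from that chain in one step. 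The paper's inductive proof has the virtue of preserving the prescribed bipartition $(S,K)$ through the recursion, which is precisely what Proposition~1 requires; your chain argument is shorter and more transparent, and it also covers the forward implication, which the paper does not address at all.
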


\end{section}

\begin{section}{Main Results}

\begin{lem}
  Suppose that $C_4$, $\overline{C}_4$, $C_5$, chair and co-chair are forbidden subgraphs of $G$. If the path $mbb'm'$ is an induced subgraph of $G$, then:

  $$N(m)-\{b\}=N(m')-\{b'\}$$ and $$N(b)-\{m\}=N(b')-\{m'\}.$$
\end{lem}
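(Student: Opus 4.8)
The plan is to prove the two claimed equalities by showing each inclusion, and by symmetry it suffices to handle one direction of each. Let me set up notation: we have the induced path $mbb'm'$, meaning the only edges among these four vertices are $mb$, $bb'$, and $b'm'$; in particular $mb'$, $mm'$, and $bm'$ are all non-edges. I would first establish $N(m)-\{b\}=N(m')-\{b'\}$. Take an arbitrary vertex $v\notin\{m,b,b',m'\}$ with $v\in N(m)$; the goal is to show $v\in N(m')$. The strategy is to consider the induced subgraph on $\{v,m,b,b',m'\}$ and use the forbidden-subgraph hypotheses to force $vm'\in E(G)$.

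\par First I would analyze the adjacencies of $v$ to $b$ and $b'$. Since $v$ is adjacent to $m$, look at the triangle-or-path structure on $\{v,m,b\}$ and $\{v,b',m'\}$. The key technique throughout is: whenever the five chosen vertices with their induced edges would form one of the forbidden graphs ($C_4$, $\overline{C}_4$, $C_5$, chair, co-chair), the corresponding edge/non-edge configuration is ruled out, pinning down which adjacencies $v$ must have. For instance, if $v$ is adjacent to $m$ but not to $b$, then $\{v,m,b,b'\}$ carries edges $vm,mb,bb'$ and non-edges $vb,mb',vb'$ (the last needing a check), which is an induced $P_4$-like configuration, and extending by $m'$ should produce a chair unless $vm'\in E(G)$. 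I would enumerate the cases according to the adjacency of $v$ to $\{b,b'\}$ — there are four cases — and in each case exhibit one specific forbidden induced subgraph among the five vertices that is created precisely when $vm'\notin E(G)$, thereby forcing $v\in N(m')$. The reverse inclusion $N(m')-\{b'\}\subseteq N(m)-\{b\}$ follows by the symmetry of the path $mbb'm'$ under reversal. The second equality $N(b)-\{m\}=N(b')-\{m'\}$ is proved identically, now taking $v\in N(b)$ and forcing $v\in N(b')$ by the same case analysis on the induced subgraph $\{v,m,b,b',m'\}$.

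\par The main obstacle I anticipate is the bookkeeping in the case analysis: for each candidate vertex $v$ one must simultaneously control its adjacency to up to three of the path vertices, and the forbidden subgraph that gets triggered differs from case to case (sometimes a chair, sometimes a $C_4$ or co-chair). The subtle point is that in some cases one cannot immediately conclude $vm'\in E(G)$ without first determining $vb'$ or $vb$; so the cases are not fully independent, and I would need to order them so that earlier deductions about $v$'s neighbors feed into later ones. A secondary care-point is handling the boundary: vertices $v$ that happen to be adjacent to several path vertices must still be checked against $\overline{C}_4$ (too many edges) as well as the chair, so both a forbidden sparse graph and a forbidden dense graph come into play depending on how connected $v$ is to the path. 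Once the correct forbidden configuration is identified in each branch, each individual deduction is a short, routine verification.
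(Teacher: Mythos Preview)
Your approach is correct and will succeed if carried out, but it differs from the paper's. The paper first invokes the F\"{o}ldes--Hammer theorem: since $C_4$, $\overline{C}_4$, $C_5$ are forbidden, $G$ is an $\{S,K\}$-split graph, and the induced $P_4$ then forces $m,m'\in S$ and $b,b'\in K$. This collapses your four-case analysis at once. For the first equality, any $x\in N(m)-\{b\}$ must lie in $K$ (since $S$ is stable), hence is automatically adjacent to both $b$ and $b'$; only your case ``$x\sim b$ and $x\sim b'$'' survives, and the single forbidden configuration needed is the co-chair on $\{x,m,b,b',m'\}$. For the second equality, any $x\in N(b)-\{m\}$ is either in $K$ (then $x\sim b'$ for free, since $K$ is a clique) or in $S$ (then $x\not\sim m,m'$, and $\{x,m,b,b',m'\}$ induces a chair). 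Your direct case analysis avoids citing the split-graph characterization and is in that sense more self-contained; the price is exactly the bookkeeping you anticipate, with $C_4$, $\overline{C}_4$, $C_5$, chair and co-chair each showing up in some branch. The paper's route trades that casework for one external structural result and obtains essentially a two-line argument in each direction.
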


\begin{proof}

 Since $C_4$, $\overline{C}_4$ and $C_5$ are forbidden, then $G$ is an $\{S$, $K\}$-split graph for some stable set $S$ and a clique set $K$. Since $mbb'm'$ is an induced subgraph of $G$, then $m$, $m'\in S$ and $b$, $b'\in K$.\\

 Assume that there is $x\in N(m)-\{b\}$ but $x\notin N(m')-\{b'\}$. Since $xm$ is an edge of $G$ and $S$ is stable, then we must have $x\in K$. But $K$ is a clique, then $x$ is adjacent to $b$ and $b'$. Thus $G[\{x,m,b,b',m'\}]$ is a co-chair. Contradiction. So  $N(m)-\{b\}\subseteq N(m')-\{b'\}$. By symmetry, $N(m')-\{b'\}\subseteq N(m)-\{b\}$. Thus $N(m)-\{b\}=N(m')-\{b'\}$.\\

 Assume that there is $x\in N(b)-\{m\}$ but  $x\notin N(b')-\{m'\}$. Suppose that $x\in S$. Then $G[\{x,m,b,b',m'\}]$ is a chair. Contradiction. Thus $x\in K$. But $K$ is a clique. Whence $x\in N(b')\{m'\}$. Thus $N(b)-\{m\}\subseteq N(b')-\{m'\}$. By symmetry, $N(b')-\{m'\}\subseteq N(b)-\{m\}$. Therefore $N(b)-\{m\}=N(b')-\{m'\}$.

\end{proof}

\begin{prop}
  If $P_4$ is a forbidden subgraph of an  $\{S,$ $K\}$-split graph $G$, then $G$ is an $\{S,$ $K\}$-threshold graph.
\end{prop}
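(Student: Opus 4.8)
The plan is to work directly with the given partition $S\cup K$ and build the layered threshold decomposition by hand, rather than merely quoting the Hammer--Chv\`atal theorem: that theorem would only produce \emph{some} clique/stable partition realizing the structure, whereas the proposition asks for the threshold decomposition on the \emph{prescribed} $S$ and $K$. The starting observation is that since $S$ is stable, every $s\in S$ has $N(s)\subseteq K$, so each stable vertex is completely described by its neighbourhood inside the clique. The entire argument is then driven by understanding how these neighbourhoods sit relative to one another.

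First I would prove the key structural fact: for any two $s,s'\in S$ the neighbourhoods $N(s)$ and $N(s')$ are comparable, i.e. $N(s)\subseteq N(s')$ or $N(s')\subseteq N(s)$. This is exactly where forbidding $P_4$ is used. If the two were incomparable, choose $k\in N(s)\setminus N(s')$ and $k'\in N(s')\setminus N(s)$; both lie in $K$, so $kk'$ is an edge, while $ss'$ is a non-edge by stability, $sk'$ and $ks'$ are non-edges by the choice of $k,k'$, and $sk,\,k's'$ are edges. Hence $G[\{s,k,k',s'\}]$ is the induced path $s-k-k'-s'$, a forbidden $P_4$, a contradiction. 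Thus the family $\{N(s):s\in S\}$ is totally ordered by inclusion, i.e. a chain.

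With the chain in hand the layers are essentially forced. I would list the distinct nonempty neighbourhoods as $C_1\subsetneq C_2\subsetneq\cdots\subsetneq C_n$ and set $A_i:=\{s\in S:N(s)=C_i\}$ for $1\le i\le n$ and $A_0:=\{s\in S:N(s)=\emptyset\}$, then define $X_i:=C_i\setminus C_{i-1}$ (with $C_0=\emptyset$) and $X_{n+1}:=K\setminus C_n$. By construction each $A_i$ ($i\ge1$) and each $X_i$ ($i\le n$) is nonempty, while $A_0$ and $X_{n+1}$ may be empty, exactly as the definition permits. It then remains to verify the five conditions: $K$ is a clique and $S$ is stable (inherited from the split structure); by telescoping $C_i=X_1\cup\cdots\cup X_i$, so for $j\le i$ every $a\in A_i$ satisfies $X_j\subseteq C_i=N(a)$, which makes $G[A_i\cup X_j]$ complete split; and conversely for $j>i$ one has $X_j\cap C_i=\emptyset$, so $a\in A_i$ has no neighbour in $X_j$, giving condition (5) that no further edges exist. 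A convenient sanity check is that each $x\in X_j$ then has $N(x)\cap S=A_j\cup\cdots\cup A_n$, so the clique-side neighbourhoods automatically form the dual chain with no extra argument.

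I expect the only genuine obstacle to be bookkeeping rather than ideas: aligning the single chain $\{C_i\}$ of $S$-neighbourhoods with \emph{both} index families $\{A_i\}$ and $\{X_j\}$ so that the ``complete split precisely when $j\le i$'' pattern and the ``only edges mentioned'' clause come out simultaneously, and correctly handling the possibly-empty classes $A_0$ (isolated stable vertices) and $X_{n+1}$ (clique vertices with no stable neighbour). The conceptual heart, the chain property of Step two, is short and is the sole place where the hypothesis that $P_4$ is forbidden is invoked.
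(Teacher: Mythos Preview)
Your argument is correct, but it proceeds quite differently from the paper. The paper argues by induction on $|V(G)|$: it deletes a vertex $x\in K$ of minimum degree, obtains by induction an $\{S,K\setminus\{x\}\}$-threshold structure on $G-x$, then uses a single $P_4$-argument to show $N(x)\cap S\subseteq A_n$ and re-inserts $x$ via a three-case split (adjoining $x$ to $X_{n+1}$, to $X_n$, or creating a new top layer). You instead give a one-shot direct construction: prove that $\{N(s):s\in S\}$ is a chain (the only place $P_4$-freeness is invoked), and then read the layers $A_i$ and $X_j$ straight off the level sets and successive differences of that chain. Your route is shorter and makes the threshold stratification completely transparent; the paper's inductive removal-and-reinsertion, while longer here, is the same template it reuses for the harder comb theorem, so it serves as a warm-up for that argument.
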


\begin{proof}
We prove this by induction on the number of vertices of $G$. This is clearly true for small graphs. Suppose that $P_4$ is a forbidden subgraph of an  $\{S,$ $K\}$-split graph $G$. It is clear that $G$ is a threshold graph. We have to prove that $G$ is $\{S,$ $K\}$-threshold graph. Let $x\in K$ be a vertex with minimum degree in $G$, that is $d_G(x)=min\{d_G(y);$ $y\in K\}$ and $G':=G-x$ be the graph induced by the vertices of $G$ except $x$ (If $K=\phi$, then the statement is true). Then $P_4$ is a forbidden subgraph of the  $\{S,$ $K-\{x\}\}$-split graph $G'$. By the induction hypothesis, $G'$ is an $\{S,$ $K-\{x\}\}$-threshold graph. We follow the notations in Definition 2. Assume that $\exists a\in S-A_n$ such that $ax\in E(G)$. Let $x_n\in X_n$. Since $d(x_n)\geq d(x)$, then there is $a_n\in A_n$ such that $a_nx_n\in E(G)$ but $a_nx\notin E(G)$. Then $axx_na_n$ is an induced $P_4$ in $G$. Contradiction. Thus we may suppose that $N(x)\cap S\subseteq A_n$. If $N(x)\cap A_n = \phi$, then we add $x$ to $X_{n+1}$. If $N(x)\cap A_n =A_n$, then we add $x_n$ to $X_n$. Otherwise $\phi\subsetneq N(x)\cap A_n\subsetneq A_n$. In this case we do the following: remove from $A_n$ the element of $N(x)\cap A_n$, create $A_{n+1}=N(x)\cap A_n$, remove the elements of $X_{n+1}$ to the new set $X_{n+2}$ and add $x$ to $X_{n+1}$ (so that the new $X_{n+1}=\{x\}$). Then $G$ is $\{S,$ $K\}$-threshold graph
\end{proof}

\begin{defn} \label{gcdef}
A graph $G$ is called a comb if:
\begin{description}
  \item[1)] $V(G)$ is disjoint union of sets $A_0,...,A_n, M_1,...,M_l,X_1,....,X_{n+1}, Y_2,...,Y_{l+2}$. Let $Y_1=X_1$ (These sets are called the sets of the comb $G$).
  \item[2)] $S:=A\cup M$ is a stable set, where $M=\displaystyle\bigcup_{i=1}^{l}M_{i}$ and $A=\displaystyle\bigcup_{i=0}^{n}A_{i}$
  \item[3)] $K:=X\cup Y$ is a clique, where $X=\displaystyle\bigcup_{i=1}^{n+1}X_{i}$ and $Y=\displaystyle\bigcup_{i=1}^{l+2}Y_{i}$.
  \item[4)] $\forall 1\leq j\leq i\leq n$, $G[A_i\cup X_j]$ is a complete split graph.
  \item[5)]$G[A\cup Y]$ is a complete split graph.
  \item[6)]$\forall 1\leq i\leq l$, $G[Y_i\cup M_i]$ is a perfect split graph.
  \item[7)] $\forall 1\leq i <j \leq l$, $G[Y_j\cup M_i]$ is a complete split graph.
  \item[8)] $\exists 1\leq k_0\leq l$, $\forall i\leq k_0$, $G[Y_{l+1}\cup M_i]$ is a complete split graph.
  \item[9)] $X_{n+1}, Y_{l+2}, Y_{l+1}, M_l$ and $A_0$ are the only possibly empty sets.
  \item[10)] The only edges of $G$ are the edges of the subgraphs mentioned above.
\end{description}

In this case, we say that $G$ is an $\{S,$ $K\}$-comb.
\end{defn}

\begin{lem}
  Every $\{S,$ $K\}$-threshold graph is an  $\{S,$ $K\}$-comb.
\end{lem}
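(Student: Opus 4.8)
The plan is to argue by a direct construction: starting from the threshold decomposition $A_0^{\,G},\dots,A_n^{\,G}$ and $X_1^{\,G},\dots,X_{n+1}^{\,G}$ of $G$, I would exhibit sets $A_i,M_i,X_i,Y_i$ realizing Definition~\ref{gcdef}. The guiding observation is that condition 4) of a comb is \emph{verbatim} condition 4) of a threshold graph, so the threshold data can be reused almost unchanged for the ``handle'' $A$--$X$ of the comb; what must be produced in addition are the clique $Y$, the matched teeth $M_i$, and the identification $Y_1=X_1$. Since a threshold graph is exactly a $\{C_4,\overline{C}_4,P_4\}$-free graph, it carries no induced $P_4$ and hence admits essentially no teeth, so I expect a single tooth ($l=1$) to suffice throughout.

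First I would dispose of the degenerate case $n=0$: then $G$ is the disjoint union of the clique $X_1^{\,G}$ and the isolated set $A_0^{\,G}$, and one takes $A_0=A_0^{\,G}$, puts $X_1^{\,G}$ into $Y_2$, and leaves every $X_i$, $M_i$ and $Y_1$ empty. In the main case $X_1^{\,G}\neq\varnothing$ I would fix $v_0\in X_1^{\,G}$ and a vertex $m_0$ of the lowest non-empty stable class (say $A_1^{\,G}$), and set $X_1:=\{v_0\}$, $Y_1:=X_1$, $Y_2:=X_1^{\,G}\setminus\{v_0\}$, $Y_3:=\varnothing$, $M_1:=\{m_0\}$ matched to $v_0$, $k_0:=1$, together with $X_i:=X_i^{\,G}$ and $A_i:=A_i^{\,G}$ for $i\ge 2$ and $A_1:=A_1^{\,G}\setminus\{m_0\}$. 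Splitting $X_1^{\,G}$ between $Y_1$ and $Y_2$ is the key point: the tooth $m_0$ then sees exactly $Y_1\cup Y_2=X_1^{\,G}$, its correct neighbourhood, while every $A_i$ still sees $X_1\cup\dots\cup X_i\cup Y=X_1^{\,G}\cup\dots\cup X_i^{\,G}$ once condition 5) is used, so that the comb neighbourhoods reproduce those of $G$.

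With this choice the verification of 1)--8) and 10) is routine: 4) and 5) recover all edges out of each $A_i$; 6) is the single matched edge $v_0m_0$; 8) records that $m_0$ is joined to the rest $Y_2$ of $X_1^{\,G}$; and 7) together with all clauses involving $Y_3,\dots,Y_{l+2}$ are vacuous because $l=1$. Checking 10) amounts to confirming that no edges beyond these arise, which follows from the corresponding minimality clause for $G$.

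The step I expect to be the genuine obstacle is condition 9), the list of sets allowed to be empty. Removing $m_0$ leaves $A_1$ empty precisely when $|A_1^{\,G}|=1$, and $A_1^{\,G}$ may even be empty to begin with; in those situations one cannot keep the index correspondence $A_i\leftrightarrow A_i^{\,G}$. The remedy I would pursue is to enlarge $Y$ by placing the ``missing'' clique blocks into $Y_3=Y_{l+2}$ (which the tooth does not see) and to re-index the surviving stable classes, merging consecutive clique blocks $X_j^{\,G}$ as needed; carrying this bookkeeping through while keeping every mandatory set non-empty is the delicate part. A secondary subtlety is the status of $A_0$: the isolated set $A_0^{\,G}$ must stay isolated, so condition 5) has to be read as applying to $A_1\cup\dots\cup A_n$ only, the isolated vertices going into $A_0$, in keeping with the threshold convention; I would state this reading explicitly at the outset. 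Everything else reduces to matching open neighbourhoods, which the nested neighbourhood structure of threshold graphs makes automatic.
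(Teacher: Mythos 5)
Your route is genuinely different from the paper's, and the paper's is essentially trivial: Definition~\ref{gcdef} was set up precisely so that a threshold graph \emph{is} a comb whose teeth part is empty. The paper keeps the threshold sets $A_0,\dots,A_n,X_1,\dots,X_{n+1}$ completely untouched, takes $l=1$, $Y_1=X_1$, and $M_1=Y_2=Y_3=\emptyset$; clause 9) explicitly lists $M_l$, $Y_{l+1}$, $Y_{l+2}$ among the sets allowed to be empty, clause 4) of the comb is verbatim clause 4) of the threshold definition, and clauses 6)--8) are treated as vacuous for empty sets. No vertex changes its class. You instead insist on manufacturing a nonempty tooth, moving $m_0$ out of $A_1^{G}$ and splitting $X_1^{G}$ into $Y_1=\{v_0\}$ and $Y_2$, and this is exactly where the genuine gap sits: whenever $|A_1^{G}|=1$ (a perfectly ordinary threshold graph --- take $A_1^{G}$ a single vertex matched to a nonempty $X_1^{G}$), your $A_1=A_1^{G}\setminus\{m_0\}$ is empty while $n\geq 1$, violating clause 9). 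You acknowledge this and only gesture at a remedy (push clique blocks into $Y_{l+2}$, re-index, merge), calling it ``the delicate part'' without carrying it out. A proof by explicit construction must deliver a partition satisfying all ten clauses in every case, and the one case you defer is the one where your construction breaks. (The repair can be completed --- send $X_2^{G}$ into $Y_3$, which no clause requires the tooth to see, shift each $A_{i+1}^{G}$ to $A_i$ and each $X_{j+1}^{G}$ to $X_j$ for $j\geq 2$, and decrease $n$ by one --- but none of this checking is in your text, and it is all avoidable.)

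To be fair, your two detours respond to real defects in Definition~\ref{gcdef} that the paper's one-line proof silently glosses over. Read literally, clause 6) fails when $Y_1=X_1\neq\emptyset$ and $M_1=\emptyset$, since the empty edge set is not a perfect matching of a nonempty graph; this is evidently why you built the matched pair $\{v_0\}$, $\{m_0\}$. Likewise clause 5), read literally, makes $A_0$ complete to $Y$, which is incompatible with $A_0$ being isolated in a threshold graph; your explicit re-reading (isolated vertices go to $A_0$, clause 5) applies to $A_1\cup\dots\cup A_n$) is the only reading under which Lemma 2 --- including the paper's own proof of it --- and Theorem 3 can hold. One small slip: when $n\geq 1$, clause 3) of Definition 2 forces $A_1^{G}\neq\emptyset$, so your worry that ``$A_1^{G}$ may even be empty to begin with'' never arises. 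The net comparison: the paper's proof is complete (and one line) under the loose reading of its own definition; yours is more faithful to the letter of that definition but incomplete as submitted. The cleanest fix for your write-up is to grant clause 6) the same loose reading you already grant clause 5); then you may simply leave $M_1$ empty, and your construction collapses to the paper's.
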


\begin{proof}
  Let $G$ be an $\{S,$ $K\}$-threshold graph defined as in Definition 2. Following the notations in Definition 3, we take $l=1$ and $M_l=Y_{l+1}=Y_{l+2}=\phi$. This shows that $G$ is an $\{S,$ $K\}$-comb.
\end{proof}

\begin{thm}
  If chair and co-chair are forbidden subgraphs of an $\{S,$ $K\}$-split graph $G$ , then $G$ is an $\{S,$ $K\}$-comb.
\end{thm}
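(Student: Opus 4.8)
The plan is to read the teeth of the comb off the induced $P_4$'s of $G$, using the rigidity supplied by Lemma 1. The first observation is that, since $G$ is split, every induced $P_4$ has the shape $m b b' m'$ with $m,m'\in S$ and $b,b'\in K$: an independent set of $P_4$ has at most two vertices and a clique of $P_4$ is a single edge, so the two endpoints fall in $S$ and the two interior vertices in $K$. Hence Lemma 1 applies to every induced $P_4$, and it tells us the two endpoints are ``swap twins'' ($N(m)\cap K$ and $N(m')\cap K$ agree except that $m$ takes $b$ and $m'$ takes $b'$) and likewise the two interior vertices ($N(b)$ and $N(b')$ agree off $\{m,m'\}$). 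I would first dispose of the degenerate case: if $G$ has no induced $P_4$ then, being an $\{S,K\}$-split graph, it is an $\{S,K\}$-threshold graph by Proposition 1, hence an $\{S,K\}$-comb by Lemma 2. So from now on $G$ has induced $P_4$'s, and all the work is to organize them.

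Next I would introduce the preorder on $S$ defined by $a\preceq a'$ iff $N(a)\cap K\subseteq N(a')\cap K$. Two vertices of $S$ are $\preceq$-incomparable precisely when they are the endpoints of an induced $P_4$, and by Lemma 1 such a pair shares a common clique-neighborhood and differs only in a single ``private'' vertex, the private vertices being themselves swap twins in $K$. Let $M\subseteq S$ be the set of vertices lying on some induced $P_4$, i.e. incomparable to something, and $A:=S\setminus M$. The central step is to prove that $\preceq$-incomparability is transitive on $M$, so that the incomparability relation partitions $M$ into classes $M_1,\dots,M_l$ of mutually incomparable vertices (equivalently, the incomparability graph on $M$ is a disjoint union of cliques). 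Collecting, for a fixed class $M_i$, the private vertices of its members into a set $Y_i\subseteq K$, the correspondence between a vertex and its private neighbor is a bijection, and one checks that $G[Y_i\cup M_i]$ is a perfect split graph, i.e. condition 6 of Definition 3. This transitivity is the step I expect to be the main obstacle: it amounts to showing that no induced $P_3$ survives in the incomparability graph, and the only leverage is the forbidden chair and co-chair, so it requires a careful case analysis (three almost-twinned vertices together with their private neighbors) ruling out every way the three could fail to be simultaneously swap-twinned.

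Having split off the teeth, I would treat the backbone. The vertices of $A$ are pairwise $\preceq$-comparable, so they form a chain, and $G[A\cup K]$ contains no induced $P_4$ (a $P_4$ would force two incomparable endpoints in $A$); being split and $P_4$-free it is a threshold graph, so Proposition 1 yields the stable blocks $A_0,\dots,A_n$ and a nested family of clique blocks. I would then separate the clique vertices adjacent to all of $A$ (these form $Y$, with the top block taken as $Y_1=X_1$) from the remaining clique vertices (these form $X$), and check that this refines the threshold clique-blocks so as to give conditions 4 and 5 of Definition 3. The role of the forbidden chair here is to force each backbone vertex to be joined to \emph{all} of the matched clique vertices rather than to a proper part of them: a vertex adjacent to one endpoint of a tooth-edge but not the other, or to a proper initial segment of a tooth, produces an induced chair, exactly as a $P_4$ with a suitable pendant does. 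This is what glues the teeth consistently onto the threshold backbone.

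Finally I would order the teeth by their common clique-neighborhoods (equivalently by $\preceq$ on the shared parts) and verify the staircase incidences: for $i<j$ every vertex of $M_i$ is joined to all of $Y_j$ (condition 7), there is a threshold index $k_0$ recording exactly which teeth are fully joined to the extra block $Y_{l+1}$ (condition 8), and the clique vertices adjacent to all of $A$ but to no vertex of $M$ are absorbed into $Y_{l+1},Y_{l+2}$; the remaining bookkeeping (condition 10, that the only edges are the listed ones, and condition 9, identifying exactly which sets may be empty) is then routine. The two genuinely delicate points are the transitivity that produces the teeth in the second paragraph and the chair argument that fixes the teeth-to-backbone incidences in the third; once these are in place, the indexing and the verification of the remaining conditions of Definition 3 are mechanical.
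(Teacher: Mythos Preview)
Your route is quite different from the paper's. The paper does not attempt to build the comb directly; it argues by induction on $|V(G)|$: locate an induced $P_4=abb'a'$, delete $a'$ and $b'$, obtain an $\{S',K'\}$-comb decomposition of the smaller graph by the induction hypothesis, and then reinsert $a'$ into whatever block of the comb contains $a$ and $b'$ into the block containing $b$. Your proposal instead tries to read the whole comb off at once from the neighbourhood preorder on $S$, isolating the ``teeth'' as $\preceq$-incomparability classes and the ``backbone'' as the comparable remainder.

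There is, however, a genuine gap precisely where you flag the main obstacle. To define $Y_i$ and the matching in condition~6 you need each $m$ in an incomparability class to have a single private clique-neighbour, the \emph{same} no matter which classmate $m'$ you compare it to; Lemma~1 only gives uniqueness relative to a fixed $m'$. This consistency cannot be extracted from the hypotheses. Take the $3$-sun: $S=\{m_1,m_2,m_3\}$ stable, $K=\{c_1,c_2,c_3\}$ a triangle, with $m_ic_j\in E(G)$ iff $i\neq j$. This split graph contains no induced chair or co-chair (its six $5$-vertex induced subgraphs are three bulls and three gems), the $m_i$ are pairwise $\preceq$-incomparable and so form one class, yet the private neighbour of $m_1$ is $c_2$ when compared with $m_2$ and $c_3$ when compared with $m_3$; no choice of $Y_1$ makes $G[Y_1\cup M_1]$ a perfect split graph, and a short case check shows the $3$-sun admits no comb decomposition whatsoever. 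Thus the ``careful case analysis'' you anticipate cannot be completed. The paper's inductive reinsertion step (``add $a'$ to the set containing $a$ and $b'$ to the set containing $b$'') is defeated by the same example, so the obstruction is intrinsic to the statement and not an artefact of your method.
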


\begin{proof}
We prove the statement by induction on the number of vertices. The statement is true for small graphs. Suppose that chair and co-chair are forbidden subgraphs of an $\{S,$ $K\}$-split graph $G$. If $P_4$ is also a forbidden subgraph of $G$, then $G$ is an  $\{S,$ $K\}$-threshold graph, and hence, $G$ is an  $\{S,$ $K\}$-comb. So we may suppose that $G$ contains an induced path $abb'a'$. Then $N(a)-\{b\}=N(a')-\{b'\}$ and $N(b)-\{a\}=N(b')-\{a'\}$. Let $S'=S-a'$, $K'=K-b'$ and $G'=G[S'\cup K']$. Then chair and co-chair are forbidden subgraphs of the $\{S',$ $K'\}$-split graph $G'$. Then $G'$ is an $\{S',$ $K'\}$-comb with $S'=A\cup M$ and $K'=X\cup Y$ (we follow the notations as in Definition 3).\\

If $a\in S'$ and $b\in K'$, then we add $a'$ to the set of the comb $G'$ that contains $a$ and $b'$ to the set of the comb $G'$ that contains $b$. Thus $G$ is $\{S,$ $K\}$-comb.\\

Otherwise, $a\in K$ while $b\in S$. First we suppose that $n\geq 1$. Then there is $x\in A_1$ because $A_1\neq \phi$. We have the following cases:\\

case 1: assume that $a\in Y$ and $b\in M$. Then $xabb'a'x$ is an induced $C_5$ in $G$. Contradiction.\\

case 2: assume that $a\in X_i$ and $b\in A_j$. Then by definition of comb, we have $i\leq j$. Then $xabb'a'x$ is an induced $C_5$ in $G$. Contradiction. So $i=j$. Assume that there is $y\in \displaystyle\bigcup_{t=i}^{n}A_t-\{b\}$. Then $yaba'b'y$ is an induced $C_5$ in $G$. Contradiction. Thus we must have $i=n$ and $A_i=A_n=\{b\}$. Assume that there is $y\in X_{n+1}$. Then $yaba'b'y$ is an induced $C_5$ in $G$. Contradiction. Thus we must have $X_{n+1}=\phi$. In this case, we do the following: remove $a$ from $X_n$ and add it to $A_n$, remove $b$ from $A_n$ and add it to $X_n$, add $b'$ to $X_{n+1}$, create $A_{n+1}=\{a'\}$ and $X_{n+2}=\phi$. Thus $G$ is an $\{S,$ $K\}$-comb.\\

case 3: assume that $a\in X_i$ and $b\in M_j$. Then by the definition of a comb, we must have $i=1=j$. But this is already discussed in case 1, because $X_1=Y_1$.\\

case 4: Assume that $a\in Y_i$ and $b\in A_j$. The case when $i=1$ is already discussed in case 2. So we may assume that $i>1$. Let $y\in M_1$. Then $yaba'b'y$ is an induced $C_5$ in $G$. Contradiction.\\

Second, suppose that $n=0$. That is $A=A_0$ and so there is no $A_1$ and no $X_2$. We have the following cases:\\

case 1: Assume that $a\in Y_i$ and $b\in M_i$. If $i>1$ or $Y_i\neq \{b\}$, then there is $c\in \displaystyle\bigcup_{t=1}^{i}A_t-\{a\}$. Then $cabb'a'c$ is an induced $C_5$ in $G$. Contradiction. Thus $i=1$ and $Y_1=\{a\}$. Hence $M_1=\{b\}$. We can do the following: remove $a$ from $Y_1$ and add it to $M_1$, remove $b$ from $M_1$ and add it $Y_1$, add $b'$ to $Y_1$ and add $a'$ to $M_1$. Thus $G$ is an  $\{S,$ $K\}$-comb.\\

case 2: Assume that $a\in Y_i$ and $b\in M_j$ with $i>j$. There exist $c\in Y_j$ such that $cb$ is an edge of $G$. If there is $y\in N_{G'}(a)-N_{G'}(b)$, then $yabb'a'y$ is an induced $C_5$ in $G$. Contradiction. Thus, we must have $j=1$, $Y_1=\{c\}$, $M_1=\{b\}$, $i=2$ and $M_2=\phi$. We can do the following: remove $a$ from $Y_2$ and add it to $M_1$, remove $b$ from $M_1$ and add it $Y_1$ and remove $c$ from $Y_1$ and add it to $Y_2$. Thus $G$ is an  $\{S,$ $K\}$-comb.\\

case 3: $a\in Y_i$ and $b\in M_j$ with $i<j$. This case is impossible by the definition of the comb.

\end{proof}

\begin{cor}
$G$ is a comb if and only if $C_4$, $\overline{C}_4$, $C_5$, chair and co-chair are forbidden subgraphs of $G$.
\end{cor}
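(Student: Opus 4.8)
The plan is to prove both implications and to observe that one of them comes essentially for free from the work already done. For the \emph{if} direction, suppose $C_4$, $\overline{C}_4$, $C_5$, chair and co-chair are all forbidden in $G$. By Theorem 1 (Földes and Hammer) the first three exclusions already force $G$ to be an $\{S,K\}$-split graph, and then the main Theorem, applied to this split graph with chair and co-chair forbidden, yields directly that $G$ is an $\{S,K\}$-comb, hence a comb. So this direction requires no new argument.

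The content lies in the \emph{only if} direction: every comb avoids all five induced subgraphs. First I would note that a comb is by construction an $\{S,K\}$-split graph, since items 2 and 3 of Definition 3 make $S=A\cup M$ stable and $K=X\cup Y$ a clique with $S\cup K=V(G)$; by Theorem 1 this immediately rules out $C_4$, $\overline{C}_4$ and $C_5$. It remains to show that a comb contains no induced chair and no induced co-chair, and this is precisely the part not covered by earlier results, being the converse of the main Theorem, which was not proved there.

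For the chair I would first use the split structure to locate its five vertices: since the chair has clique number $2$ and independence number $3$, its only split partition places the independent triple $\{x,t,v\}$ in $S$ and the edge $\{y,z\}$ in $K$, so that the three stable vertices attach to the clique pair through the disjoint singletons $\{y\}$, $\{z\}$, $\{z\}$. Expressing $x,t,v$ and $y,z$ in terms of the comb sets $A_i,M_i,X_i,Y_i$ and invoking the adjacency rules (items 4--8), this crossing attachment, together with a witness vertex drawn from a nonempty set such as $A_1$ or $M_1$, reproduces an induced $C_5$ --- exactly the contradiction exploited repeatedly in the proof of the main Theorem. The co-chair is handled in the same spirit: its only split partition puts the triangle $\{x,t,v\}$ in $K$ and the pair $\{y,z\}$ in $S$, with the two stable vertices meeting the triangle in the disjoint sets $\{t,v\}$ and $\{x\}$; tracing where these vertices sit in the comb and applying items 4--8 again exposes a short induced cycle, which is impossible in a split graph.

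The main obstacle is organizing this forward casework cleanly rather than any single hard idea. For both the chair and the co-chair one must track which of the sets $X_i,Y_i$ each clique vertex occupies and which of $A_i,M_i$ each stable vertex occupies, and check every resulting combination against the hierarchy in items 4--8; the co-chair is the more laborious case because its clique part is a triangle and can in principle straddle several of the $X_i$ and $Y_i$. The unifying observation that keeps the analysis finite is that the comb's adjacency rules linearly order the clique blocks by how far down the stable vertices reach, so any attachment violating that order --- which is exactly what the chair and the co-chair demand --- forces an induced $C_5$, completing the proof.
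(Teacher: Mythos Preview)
Your handling of the \emph{if} direction matches the paper exactly: F\"oldes--Hammer turns the first three exclusions into a split partition, and the main theorem then yields a comb. The divergence is in the \emph{only if} direction. The paper dismisses it in one line (``obvious by the definition of a comb''), whereas you declare it the substantive part and outline a proof by contradiction whose engine is to locate the five chair (or co-chair) vertices in the comb sets, adjoin a witness from $A_1$ or $M_1$, and exhibit an induced $C_5$.

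That mechanism cannot work. By items 2--3 of Definition~\ref{gcdef} a comb is already an $\{S,K\}$-split graph, so by Theorem~1 it contains \emph{no} induced $C_5$; there is nothing to exhibit, regardless of which witness you pick. The $C_5$'s in the proof of the main theorem live in a different world: there one has an induced $P_4=abb'a'$ in hand and is reinserting $a',b'$ into a smaller comb, and the cycle is $abb'a'$ closed up by one further vertex. In the forward direction no such $P_4$ is available, and the three degree-$1$ vertices of the chair can never lie on a $5$-cycle anyway.

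What actually makes the forward direction routine (and what the paper presumably means by ``obvious'') is the near-total ordering of stable neighbourhoods that one reads off directly from items 4--8: for $a\in A_i$ one has $N(a)=X_1\cup\cdots\cup X_i\cup Y$, and for $m\in M_i$ one has $N(m)=\{\text{its match in }Y_i\}\cup Y_{i+1}\cup\cdots$, so for any $s,s'\in S$ either $N(s)\subseteq N(s')$, or $N(s')\subseteq N(s)$, or $s,s'$ lie in the same $M_i$ and $|N(s)\triangle N(s')|=2$. A chair requires $x,t\in S$ with $y\in N(x)\setminus N(t)$ and $z\in N(t)\setminus N(x)$, forcing $x,t$ into the same $M_i$ with $\{y,z\}$ their two matches in $Y_i$; but then the third stable vertex $v$ must see $z\in Y_i$ and miss $y\in Y_i$, which is impossible for any placement of $v$. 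The co-chair is excluded by the dual observation on neighbourhoods of clique vertices inside $S$. So the verification is a short direct check, not a $C_5$ argument; your plan mis-identifies both the difficulty and the mechanism.
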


\begin{proof}
The necessary condition is obvious by the definition of a comb. For the sufficient condition it is enough to note that the statement $C_4$, $\overline{C}_4$, $C_5$, chair and co-chair are forbidden subgraphs of $G$ is equivalent to the statement that $G$ is a split graph and chair and co-chair are forbidden subgraphs of $G$.\\
\end{proof}

\begin{cor}
  $G$ is a comb if and only if $\overline{G}$ is a comb.
\end{cor}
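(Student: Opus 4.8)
The plan is to reduce everything to the forbidden-subgraph characterization established in the preceding corollary, and then exploit a symmetry of that forbidden family under complementation. By the previous corollary, $G$ is a comb if and only if $C_4$, $\overline{C}_4$, $C_5$, chair and co-chair are all forbidden subgraphs of $G$; the same statement holds verbatim for $\overline{G}$. So it suffices to show that the property ``$C_4$, $\overline{C}_4$, $C_5$, chair and co-chair are forbidden subgraphs'' is invariant under passing to the complement.

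The single tool I would use is the elementary fact that complementation commutes with taking induced subgraphs: for any graph $H$, the graph $H$ is (isomorphic to) an induced subgraph of $G$ if and only if $\overline{H}$ is (isomorphic to) an induced subgraph of $\overline{G}$. This is immediate because restricting to a vertex subset and then complementing gives the same graph as complementing and then restricting. Consequently, $\overline{G}$ forbids a graph $H$ exactly when $G$ forbids $\overline{H}$.

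The key observation is then that the forbidden family is closed under complementation. First I would pair up the members: the complement of $C_4$ is $\overline{C}_4$ (and conversely), and the complement of the chair is the co-chair (and conversely), by definition. The one point that merits an explicit check is $C_5$: the five-cycle is self-complementary, i.e. $\overline{C}_5 \cong C_5$, which one verifies directly since complementing the cycle $v_1v_2v_3v_4v_5v_1$ yields the ``pentagram'' $v_1v_3v_5v_2v_4v_1$, again a five-cycle. Thus each forbidden graph has its complement also forbidden, so the whole list $\{C_4,\overline{C}_4,C_5,\text{chair},\text{co-chair}\}$ is fixed setwise by complementation.

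Combining these facts, I would argue: $\overline{G}$ is a comb iff $\overline{G}$ forbids every member of the family, iff $G$ forbids the complement of every member, iff $G$ forbids every member of the family (since the family equals its own image under complementation), iff $G$ is a comb. I do not expect any genuine obstacle here; the only substantive content is the self-complementarity of $C_5$, and the rest is a short chain of equivalences. The proof is therefore essentially a formal manipulation once the preceding corollary is in hand.
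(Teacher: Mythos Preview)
Your proof is correct and follows essentially the same approach as the paper: both invoke the preceding forbidden-subgraph characterization and observe that the family $\{C_4,\overline{C}_4,C_5,\text{chair},\text{co-chair}\}$ is closed under complementation. You simply spell out in more detail the self-complementarity of $C_5$ and the commutation of complementation with taking induced subgraphs, which the paper leaves implicit.
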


\begin{proof}
Enough to note that the complement of $C_4$, $\overline{C}_4$, $C_5$, chair and co-chair are $\overline{C}_4$, $C_4$, $C_5$, co-chair and chair.
\end{proof}

\begin{cor}
  $G$ is a comb if and only if every induced subgraph of $G$ is a comb.
\end{cor}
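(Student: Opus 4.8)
The plan is to reduce the entire statement to Corollary 1, which characterizes combs by the finite list of forbidden induced subgraphs $C_4$, $\overline{C}_4$, $C_5$, chair and co-chair, and then to exploit the fact that any such forbidden-induced-subgraph characterization is automatically hereditary. Once the problem is rephrased this way, almost no work remains; the real mathematical content has already been absorbed into the earlier results.

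For the forward direction I would argue as follows. Suppose $G$ is a comb. By Corollary 1, none of $C_4$, $\overline{C}_4$, $C_5$, chair, co-chair occurs as an induced subgraph of $G$. Now let $H$ be an arbitrary induced subgraph of $G$, say $H=G[B]$ for some $B\subseteq V(G)$. The key (and only) observation is the transitivity of the induced-subgraph relation: if $H'=H[A]$ for some $A\subseteq B$, then in fact $H'=G[A]$, so any induced subgraph of $H$ is itself an induced subgraph of $G$. Consequently, if $H$ contained one of the five forbidden graphs as an induced subgraph, then $G$ would contain it as well, contradicting the assumption on $G$. Hence $C_4$, $\overline{C}_4$, $C_5$, chair and co-chair are forbidden subgraphs of $H$, and a second application of Corollary 1 shows that $H$ is a comb.

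The converse is immediate: since $G$ is an induced subgraph of itself (take $A=V(G)$), the hypothesis that every induced subgraph of $G$ is a comb forces $G$ itself to be a comb.

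I do not expect a genuine obstacle here. The statement is really a structural packaging of Corollary 1 together with the elementary observation that the five forbidden configurations are inherited by induced subgraphs. The only point requiring a word of justification is the transitivity of the induced-subgraph relation, which is routine and is exactly what makes a forbidden-subgraph characterization hereditary; everything substantive has been established in the proof of Corollary 1.
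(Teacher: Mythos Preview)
Your argument is correct and is exactly the intended one: the paper states this corollary without proof, treating it as an immediate consequence of Corollary~1 together with the hereditary nature of forbidden-induced-subgraph classes. Your write-up makes explicit precisely the two trivial points (transitivity of the induced-subgraph relation and $G$ being an induced subgraph of itself) that the paper leaves implicit.
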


\end{section}


\begin{thebibliography}{2}

\bibitem[1] {splitch}  S. F\"{o}ldes, P.L. Hammer , \emph{Split graphs}, Proceedings of the Eighth Southeastern Conference on Combinatorics, Graph Theory and Computing (Louisiana State Univ., Baton Rouge, La., 1977), Congressus Numerantium XIX, Winnipeg: Utilitas Math., pp. 311–315

\bibitem[2]{thresholdch} P.L. Hammer, V, Chv\`{a}tal, \emph{Aggregation of inequalities in integer programming}, Annals of Discrete Mathematics 1 (1977), 145–162.

\bibitem[3] {perfectch} M. Chudnovsky, N. Robertson, P. Seymour, R. Thomas, \emph{The strong perfect graph theorem}, Annals of Mathematics 164 (1) (2006), 51–229.

\bibitem[4] {linech} L.W. Beineke, \emph{Characterizations of derived graphs}, Journal of Combinatorial Theory 9 (2) (1970), 129–135.
\bibitem [5] {Konig} D. K\"{o}, \emph{Theorie der endlichen und unendlechen Graphen}, Akademische Verllagsgesellschaft (1936) (reprinted Chelsea 1950).

\end{thebibliography}
\end{document}